\newtheorem{theorem}{Theorem}[section]
\newtheorem{prop}[theorem]{Proposition}
\theoremstyle{definition}
\theoremstyle{remark}
\numberwithin{equation}{section}
\newcommand\nutwid{\overset {\text{\lower 3pt\hbox{$\sim$}}}\nu}
\newcommand\omycite[1]{}
\newcommand{\beqs}{\begin{equation*}}
\newcommand{\eeqs}{\end{equation*}}
\newcommand{\beq}{\begin{equation}}
\newcommand{\eeq}{\end{equation}}
\renewcommand{\MR}[1]{\href{http://www.ams.org/mathscinet-getitem?mr={#1}}{MR{#1}}}
\begin{document}
\title[New proofs of theorems]{New proofs of theorems on q-orthogonal functions}

\author{Dandan Chen}
\address{Department of Mathematics, Shanghai University, People's Republic of China}
\address{Newtouch Center for Mathematics of Shanghai University, Shanghai, People's Republic of China}
\email{mathcdd@shu.edu.cn}
\author{Zhiguo Liu}
\address{School of Mathematical Sciences and Shanghai Key Laboratory of PMMP, East China Normal University,500 Dongchuan Road, Shanghai 200241, P. R. China}
\email{zgliu@math.ecnu.edu.cn;liuzg@hotmail.com}


\subjclass[2010]{05A30, 33d05, 33D15, 33D45, 42C05}

\date{}


\keywords{$q$-beta integral, $q$-orthogonal functions, $q$-ultraspherical polynomials, Askey-Wilson polynomials.}

\begin{abstract}
In this paper, we  establish a $q$-integral formula by using the orthogonality relation, and   also provide a new proof of the $q$-orthogonality relation for the continuous $q$-ultraspherical polynomials. A new $q$-beta integral with five parameters is evaluated.
\end{abstract}

\maketitle

\section{Introduction}

In his work on the Dirichlet problem for the Heisenberg group, Greiner \cite{Greiner-CMB-80} showed that each $L_\alpha$-spherical harmonic is a unique linear combination of functions of the form
\begin{align*}
e^{in\psi}\sin^{|n|/2}\theta H_k^{(\alpha,n)}(e^{i\theta});
\end{align*}
 where $H_k^{(\alpha,n)}(e^{i\theta})t^k$ is defined by the generating function
\begin{align*}
(1-te^{-i\theta})^{(n-|n|+\alpha-1)/2}(1-te^{i\theta})^{-(n+|n|+\alpha+1)/2}
=\sum_{k=0}^{\infty}H_k^{(\alpha,n)}(e^{i\theta})t^k
\end{align*}
with $k\in \mathbb{N}$ and $n\in \mathbb{Z}$. Greiner \cite{Greiner-CMB-80} raised the question of whether the functions $H_k^{(\alpha,n)}(e^{i\theta})t^k$ are orthogonal or bi-orthogonal with respect to some complex valued weight functions. Gasper \cite{Gasper-CM-81} explained that this was indeed the case and showed that the more general class of functions $C_k^{(\alpha,\beta)}(e^{i\theta})$ satisfied the orthogonality relation. He also considered $q$-(basic) analogs of $C_k^{(\alpha,\beta)}(e^{i\theta})$ which contain the continuous $q$-ultraspherical polynomials as a special case \cite{Gasper-CM-81}, and provided conditions under which they are orthogonal.

The continuous $q$-ultraspherical polynomials, also known as the Rogers-Askey-Ismail polynomials, constitute a family of orthogonal polynomials introduced by L. J. Rogers  in 1895 \cite{Rogers-PLM-95}. He utilized them to establish the Rogers-Ramanujan identities. Rogers described several important properties of these polynomials, but he  did not establish their orthogonality. The orthogonality of these polynomials was first proved by R. Askey and M. E. H. Ismail  in 1980 \cite{Askey-Ismail-80}.

The continuous $q$-ultraspherical polynomials are defined in \cite{Askey-Ismail-80} by
\begin{align}\label{defn-c-beta}
\frac{(\beta re^{i\theta},\beta re^{-i\theta};q)_\infty}{( re^{i\theta}, re^{-i\theta};q)_\infty}
=\sum_{n=0}^{\infty}C_n(\cos\theta;\beta|q)r^n.
\end{align}
Here and later, we use the standard $q$-series notation, where for $|q|<1$ and $n\in\mathbb{N^{+}}$,
\begin{align*}
(a;q)_0:=1, \quad (a;q)_n:=\prod_{k=0}^{n-1}(1-aq^k), \quad (a;q)_\infty:=\prod_{k=0}^\infty(1-aq^k).
\end{align*}
By \eqref{defn-c-beta}, we can deduce that $C_0(\cos\theta;\beta|q)=1$ and $C_1(\cos\theta;\beta|q)=2\cos\theta(1-\beta)/(1-q)$. Additionally, a recurrence relation for these polynomials is given by
\begin{align*}
(1-q^{n+1})C_{n+1}(\cos\theta;\beta|q)=2\cos\theta(1-\beta q^n)C_n(\cos\theta;\beta|q)-(1-\beta^2 q^{n-1})C_{n-1}(\cos\theta;\beta|q).
\end{align*}

From \eqref{defn-c-beta} and the $q$-binomial theorem \cite[p. 92]{Gasper-Rahman-04}
\begin{align}\label{defn-bi-thm}
\frac{(az;q)_\infty}{(z;q)_\infty}=\sum_{n=0}^{\infty}\frac{(a;q)_n}{(q;q)_n}z^n,\quad \text{for}~|z|<1,
\end{align}
it follows that
\begin{align*}
C_n(\cos\theta;\beta|q)=\sum_{k=0}^{n}\frac{(\beta;q)_k(\beta;q)_{n-k}}{(q;q)_k(q;q)_{n-k}}e^{i(n-2k)\theta}
=\sum_{k=0}^{n}\frac{(\beta;q)_k(\beta;q)_{n-k}}{(q;q)_k(q;q)_{n-k}}\cos(n-2k)\theta.
\end{align*}

Let $\omega_\beta(\cos\theta|q)$ be defined by \cite{Askey-Ismail-80} as
\begin{align*}
\omega_\beta(\cos\theta|q)=\frac{(e^{2i\theta},e^{-2i\theta};q)_\infty}{( \beta e^{2i\theta}, \beta e^{-2i\theta};q)_\infty}.
\end{align*}
\begin{theorem}\label{thm-cq-orth-b}
For $|q|<1$ and $|\beta|<1$, the orthogonality relation for the continuous $q$-ultraspherical polynomials states that
\begin{align}\label{eq-cq-orth-b}
\int_{0}^{\pi}C_m(\cos\theta;\beta|q)C_n(\cos\theta;\beta|q)\omega_\beta(\cos\theta|q)d\theta
=\frac{\delta_{m,n}}{h_n(\beta|q)},
\end{align}
where
\begin{align*}
h_n(\beta|q)=\frac{(q,\beta^2;q)_\infty (q;q)_n (1-\beta q^n)}{2\pi(\beta,\beta q;q)_\infty (\beta^2;q)_n (1-\beta)}
\end{align*}
and the Kronecker delta $\delta_{m,n}$ is defined as $1$ for $m=n$ and $0$ otherwise.
\end{theorem}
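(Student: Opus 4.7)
The plan is to prove the orthogonality \eqref{eq-cq-orth-b} by evaluating its bilinear generating function and then matching power-series coefficients. Set
\begin{equation*}
F(r,s):=\int_0^\pi \Biggl(\sum_{m=0}^\infty C_m(\cos\theta;\beta|q)\,r^m\Biggr)\Biggl(\sum_{n=0}^\infty C_n(\cos\theta;\beta|q)\,s^n\Biggr)\omega_\beta(\cos\theta|q)\,d\theta.
\end{equation*}
By extracting the coefficient of $r^m s^n$, the orthogonality relation \eqref{eq-cq-orth-b} is equivalent to the single closed-form identity
\begin{equation*}
F(r,s)=\sum_{n=0}^\infty\frac{(rs)^n}{h_n(\beta|q)}
     =\frac{2\pi(\beta,\beta q;q)_\infty}{(q,\beta^2;q)_\infty}\sum_{n=0}^\infty\frac{(\beta^2;q)_n(1-\beta q^n)}{(q;q)_n(1-\beta)}(rs)^n.
\end{equation*}

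First I would substitute the generating function \eqref{defn-c-beta} into the integral to collapse $F(r,s)$ into a single $q$-beta integral,
\begin{equation*}
F(r,s)=\int_0^\pi\frac{(\beta re^{i\theta},\beta re^{-i\theta},\beta se^{i\theta},\beta se^{-i\theta},e^{2i\theta},e^{-2i\theta};q)_\infty}{(re^{i\theta},re^{-i\theta},se^{i\theta},se^{-i\theta},\beta e^{2i\theta},\beta e^{-2i\theta};q)_\infty}\,d\theta.
\end{equation*}
The essential step is to evaluate this integral in closed form. My strategy is to recognize $\omega_\beta(\cos\theta|q)$ as an Askey--Wilson weight with the four parameters $\beta^{1/2},-\beta^{1/2},(q\beta)^{1/2},-(q\beta)^{1/2}$, since $(\beta e^{\pm 2i\theta};q)_\infty$ factors into the corresponding four $q$-shifted factorials $(a_j e^{\pm i\theta};q)_\infty$. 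Expanding each ratio $(\beta xe^{\pm i\theta};q)_\infty/(xe^{\pm i\theta};q)_\infty$ with $x\in\{r,s\}$ by the $q$-binomial theorem \eqref{defn-bi-thm} and integrating termwise against this Askey--Wilson weight reduces the problem to summing a multiple $q$-series in the four expansion indices.

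The final step is to identify this multiple series with the diagonal sum $\sum_n(rs)^n/h_n(\beta|q)$. I expect that it collapses to a single sum in $rs$ via a $q$-Gauss summation or a Bailey-type ${}_6\phi_5$ identity, and that extracting the coefficient of $r^m s^n$ then yields simultaneously the vanishing for $m\neq n$ and the explicit normalization $h_n(\beta|q)$.

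The hard part will be the closed-form evaluation of $F(r,s)$: one must control the interaction between the four numerator factors $(\beta xe^{\pm i\theta};q)_\infty$ and the four-factor Askey--Wilson decomposition of $(\beta e^{\pm 2i\theta};q)_\infty$ in the denominator, and then collapse the resulting multi-sum to a function of $rs$ alone. This requires a precise $q$-hypergeometric summation and is the technical heart of the argument; the clean appearance of the factor $(1-\beta q^n)/(1-\beta)$ in $h_n(\beta|q)$ is a strong hint that a very-well-poised $q$-series identity is the right tool.
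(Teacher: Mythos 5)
Your reduction is fine as far as it goes: orthogonality is indeed equivalent, via \eqref{defn-c-beta} and termwise integration (which needs a convergence justification of the kind supplied by Proposition \ref{prop-cq-condi}), to the single closed-form evaluation $F(r,s)=\sum_{n\ge 0}(rs)^n/h_n(\beta|q)$. The problem is that your argument stops exactly where the proof has to begin. The closed-form evaluation of $F(r,s)$ is not a routine lemma you may cite in passing; it is logically equivalent to the theorem itself, and your text only gestures at it (``I expect that it collapses\dots via a $q$-Gauss summation or a Bailey-type ${}_6\phi_5$ identity''). Moreover the intermediate step you do describe is not self-contained either: after expanding the four ratios $(\beta xe^{\pm i\theta};q)_\infty/(xe^{\pm i\theta};q)_\infty$ by \eqref{defn-bi-thm}, ``integrating termwise against the Askey--Wilson weight'' requires the moments $\int_0^{\pi}\cos(N\theta)\,\omega_\beta(\cos\theta|q)\,d\theta$, which are themselves a nontrivial evaluation (essentially the Askey--Ismail integral quoted at the end of Section \ref{note}, resting on the Jacobi triple product or an equivalent), and the resulting four-fold sum still has to be collapsed to a function of $rs$ alone by an identity you have not named precisely, let alone verified. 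As written, the proposal is a plan for the classical Askey--Ismail proof rather than a proof.

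For comparison, the paper does not attack \eqref{eq-cq-orth-b} head-on: it obtains Theorem \ref{thm-cq-orth-b} as the specialization $\alpha=\beta$ of Theorem \ref{thm-cq-orth-ab} (the symmetry of the integrand in $\theta\mapsto 2\pi-\theta$ halves the interval, and $(\beta;q)_\infty=(1-\beta)(\beta q;q)_\infty$ reconciles the constants). The proof of Theorem \ref{thm-cq-orth-ab} avoids any Askey--Wilson-type integral evaluation: the Jackson $q$-integral representation \eqref{eq-Phi-int}, coming from the Al-Salam--Verma form of Sears' nonterminating $q$-Saalsch\"utz summation, rewrites the weighted bilinear generating function as two one-sided Fourier series $\sum_{n\ge k}\lambda_ne^{-in\theta}+\sum_{n\ge k}\delta_ne^{in\theta}$, so the only integral ever computed is $\int_0^{2\pi}e^{in\theta}d\theta=2\pi\delta_{n,0}$, and the constants $\lambda_0,\delta_0$ fall out of the $q$-binomial theorem. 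Supplying some such mechanism for the closed-form evaluation --- or at least an exact citation of the bilinear generating-function integral you need, together with a verification that it matches $\sum_n(rs)^n/h_n(\beta|q)$ --- is what is missing from your write-up.
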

For more details regarding the theorem discussed above, the reader may be referred to the works of \cite{Askey-Ismail-80,Askey-Ismail-83,Askey-Wilson-85,Gasper-Rahman-04,Ismail-05,KLS-10}.
Let the continuous $q$-Hermite polynomials $H_n(x|q)$ be defined by $H_n(x|q)=(q;q)_nC_n(x;0|q)$. When $\beta=0$, the orthogonality relation in \eqref{eq-cq-orth-b} simplifies to:
\begin{align*}
\int_{0}^{\pi}(e^{2i\theta},e^{-2i\theta};q)_\infty H_n(\cos\theta|q)H_m(\cos\theta|q)d\theta
=\frac{2\pi(q;q)_m}{(q;q)_\infty}\delta_{m,n}.
\end{align*}

We have derived the following $q$-integral formula by using the orthogonality relation of the continuous $q$-ultraspherical polynomials.
\begin{theorem}\label{thm-cq-mn}
For $|q|<1$ and $|\beta|<1$, we have
\begin{align*}
&\int_{0}^{\pi}C_m(\cos\theta;\gamma|q)C_n(\cos\theta;\beta|q)\omega_\beta(\cos\theta|q)d\theta\\
=&\bigg\{\begin{matrix}
\frac{(1-\beta q^n)\beta^{\frac{m-n}{2}}(\gamma/\beta;q)_{\frac{m-n}{2}}(\gamma;q)_{\frac{m+n}{2}}}
{(1-\beta)h_n(\beta|q)(q;q)_{\frac{m-n}{2}}(q\beta;q)_{\frac{m+n}{2}}},&\textrm{if} \qquad m\equiv n\pmod 2\\
0,&\textrm{if}\qquad m\neq n\pmod 2.
\end{matrix}
\end{align*}
\end{theorem}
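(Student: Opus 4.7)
The plan is to expand $C_m(\cos\theta;\gamma|q)$ in the basis $\{C_k(\cos\theta;\beta|q)\}_{k\ge 0}$ via a connection coefficient formula, and then apply the orthogonality relation of Theorem~\ref{thm-cq-orth-b} to collapse the resulting sum to a single term. First I would dispose of the vanishing cases: from the cosine expansion $C_n(\cos\theta;\gamma|q)=\sum_{k=0}^{n}\frac{(\gamma;q)_k(\gamma;q)_{n-k}}{(q;q)_k(q;q)_{n-k}}\cos((n-2k)\theta)$ one reads off $C_n(-x;\gamma|q)=(-1)^n C_n(x;\gamma|q)$, and $\omega_\beta$ is clearly invariant under $\theta\mapsto\pi-\theta$. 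Substituting $\theta\mapsto\pi-\theta$ in the integral multiplies it by $(-1)^{m+n}$, so it vanishes when $m\not\equiv n\pmod 2$. The subcase $m<n$ of the same parity also vanishes, since $C_m$ then has degree $<n$ in $\cos\theta$ and is orthogonal to $C_n$ by Theorem~\ref{thm-cq-orth-b}; this matches the convention $1/(q;q)_{(m-n)/2}=0$ for negative index.

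For $m\ge n$ of the same parity I would invoke Rogers' connection coefficient formula
\[
C_m(\cos\theta;\gamma|q)=\sum_{j=0}^{\lfloor m/2\rfloor}\frac{\beta^j(1-\beta q^{m-2j})(\gamma/\beta;q)_j(\gamma;q)_{m-j}}{(1-\beta)(q;q)_j(\beta q;q)_{m-j}}\,C_{m-2j}(\cos\theta;\beta|q),
\]
which can be derived from the generating-function identity
\[
\sum_{m\ge 0}C_m(\cos\theta;\gamma|q)r^m=\frac{(\gamma re^{i\theta},\gamma re^{-i\theta};q)_\infty}{(\beta re^{i\theta},\beta re^{-i\theta};q)_\infty}\sum_{k\ge 0}C_k(\cos\theta;\beta|q)r^k
\]
by expanding the prefactor ratio via the $q$-binomial theorem~\eqref{defn-bi-thm} and matching coefficients of $r^m$ on both sides. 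Substituting this expansion into the integral, interchanging the (finite) sum with the integral, and applying Theorem~\ref{thm-cq-orth-b} annihilates every term except the one with $m-2j=n$, i.e., $j=(m-n)/2$. Inserting $j=(m-n)/2$, $m-j=(m+n)/2$, $m-2j=n$, together with the factor $1/h_n(\beta|q)$ coming from orthogonality, then produces exactly the asserted formula.

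The main obstacle I expect is Rogers' formula itself: the $q$-binomial expansion of the prefactor produces a double sum with symmetric factors $(\gamma/\beta;q)_j(\gamma/\beta;q)_\ell\beta^{j+\ell}e^{i(j-\ell)\theta}$ in $j,\ell\ge 0$, and converting it (after multiplication by $\sum_k C_k(\cos\theta;\beta|q)r^k$) into the advertised single-index form, with $(\gamma;q)_{m-j}$, $(\beta q;q)_{m-j}$ and the balancing weight $(1-\beta q^{m-2j})/(1-\beta)$, requires the splitting $(\gamma;q)_{m-j}=(\gamma;q)_j(\gamma q^j;q)_{m-2j}$ together with a nontrivial summation in the inner index. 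Once Rogers' formula is in place, the remainder of the proof reduces to mechanical manipulation of $q$-Pochhammer symbols.
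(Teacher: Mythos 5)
Your strategy is sound and genuinely different from the paper's. You expand $C_m(\cos\theta;\gamma|q)$ in the basis $\{C_k(\cos\theta;\beta|q)\}$ via Rogers' connection formula and then kill all but one term with Theorem \ref{thm-cq-orth-b}; the parity argument via $\theta\mapsto\pi-\theta$ and the degree argument for $m<n$ are both correct, and substituting $j=(m-n)/2$ into the connection coefficients does reproduce the stated right-hand side. The paper goes the other way around: it never assumes the connection formula, but instead multiplies the orthogonality relation \eqref{eq-cq-orth-b} by $(1-\beta q^n)t^n$, sums using the generating function \eqref{eq-cq-b-2}, and then uses the Rogers ${}_{6}W_{5}$ summation to transmute the $\beta$-kernel $(\beta tqe^{i\theta},\beta tqe^{-i\theta};q)_\infty/(te^{i\theta},te^{-i\theta};q)_\infty$ into the $\gamma$-generating function, after which coefficient extraction in $t$ gives the theorem. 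Indeed the paper then \emph{deduces} Rogers' connection formula as a corollary of Theorem \ref{thm-cq-mn}, and explicitly remarks that if one knows the connection formula in advance the theorem follows easily --- which is exactly your route. What your approach buys is brevity and transparency, at the cost of importing a classical but nontrivial input; what the paper's approach buys is self-containedness and, as a bonus, a new proof of the connection formula.

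The one genuine soft spot is your derivation of the connection formula itself. The double sum you get from applying the $q$-binomial theorem to $(\gamma re^{\pm i\theta};q)_\infty/(\beta re^{\pm i\theta};q)_\infty$ produces terms $e^{i(j-\ell)\theta}C_k(\cos\theta;\beta|q)$ with $j+\ell+k=m$, and regrouping these into the single-index form with coefficients $\beta^j(1-\beta q^{m-2j})(\gamma/\beta;q)_j(\gamma;q)_{m-j}/\bigl((1-\beta)(q;q)_j(\beta q;q)_{m-j}\bigr)$ is not a mechanical Pochhammer rearrangement: the ``nontrivial summation in the inner index'' you flag is precisely a Rogers ${}_{6}\phi_{5}$ (or equivalent very-well-poised) evaluation, i.e.\ the same deep ingredient the paper deploys inside the integral. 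As written, your proposal therefore does not close without either (a) citing Rogers' formula to \cite{Rogers-PLM-95} or \cite{Askey-Ismail-83} (legitimate, and free of circularity since those proofs do not use Theorem \ref{thm-cq-mn}), or (b) actually carrying out that inner summation. If you take option (a), the rest of your argument is complete and correct.
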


It is natural to consider, for a $q$-analog of $C_k^{(\alpha,\beta)}(e^{i\theta})$, the functions $C_k^{(\alpha,\beta)}(e^{i\theta};q)$ defined by \cite[Eq. (3.2)]{Gasper-CM-81}
\begin{align}\label{defn-cq-ab}
\frac{(\alpha te^{i\theta},\beta te^{-i\theta};q)_\infty}
{( te^{i\theta}, te^{-i\theta};q)_\infty}
=\sum_{n=0}^{\infty}C_n^{(\alpha,\beta)}(e^{i\theta};q)t^n.
\end{align}

From \eqref{defn-bi-thm} and \eqref{defn-cq-ab}, it follows that
\begin{align*}
C_n^{(\beta,\beta)}(e^{i\theta};q)=\sum_{k=0}^{n}\frac{(\beta;q)_k(\beta;q)_{n-k}}{(q;q)_k(q;q)_{n-k}}e^{i(n-2k)\theta}
=\sum_{k=0}^{n}\frac{(\alpha;q)_k(\beta;q)_{n-k}}{(q;q)_k(q;q)_{n-k}}\cos(n-2k)\theta.
\end{align*}
And we have $C_n^{(\beta,\beta)}(e^{i\theta};q)=C_n(\cos\theta;\beta|q)$. Hence $C_n^{(\alpha,\beta)}(e^{i\theta};q)$ are extensions of the continuous $q$-ultraspherical polynomials.

In this paper, we provide a new proof of the following orthogonality relation for the $q$-functions $C_n^{(\alpha,\beta)}(e^{i\theta};q)$.
\begin{theorem} $\mathrm{(}$\cite{Gasper-CM-81}$\mathrm{)}$\label{thm-cq-orth-ab}
For $|q|<1$, $|\alpha|<1$, and $|\beta|<1$, we have
\begin{align}\label{eq-cq-orth-ab}
&\int_{0}^{2\pi}C_m^{(\alpha,\beta)}(e^{i\theta};q)C_n^{(\alpha,\beta)}(e^{i\theta};q)
\omega^{(\alpha,\beta)}(\cos\theta|q)d\theta\nonumber\\
=&\frac{2\pi(\alpha,\beta;q)_\infty}{(q,\alpha\beta;q)_\infty}
\left(\frac{1}{1-\alpha q^n}+\frac{1}{1-\beta q^n}\right)\frac{(\alpha\beta;q)_n}{(q;q)_n}\delta_{m,n},
\end{align}
where
\begin{align*}
\omega^{(\alpha,\beta)}(\cos\theta|q)=\frac{(e^{2i\theta},e^{-2i\theta};q)_\infty}{( \alpha e^{2i\theta}, \beta e^{-2i\theta};q)_\infty}.
\end{align*}
\end{theorem}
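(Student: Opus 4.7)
The plan is to prove \eqref{eq-cq-orth-ab} by recasting it, via generating functions, as a single integral identity and then evaluating that integral through the Fourier expansion of the weight. Multiplying both sides of \eqref{eq-cq-orth-ab} by $s^m t^n$ and summing over $m,n\ge 0$, and using \eqref{defn-cq-ab} twice on the left, converts the orthogonality into the equivalent identity
\begin{align*}
I(s,t) &:= \int_{0}^{2\pi}\frac{(\alpha se^{i\theta},\beta se^{-i\theta},\alpha te^{i\theta},\beta te^{-i\theta};q)_\infty}{(se^{i\theta},se^{-i\theta},te^{i\theta},te^{-i\theta};q)_\infty}\omega^{(\alpha,\beta)}(\cos\theta|q)\,d\theta \\
 &= \frac{2\pi(\alpha,\beta;q)_\infty}{(q,\alpha\beta;q)_\infty}\sum_{n=0}^\infty\left(\frac{1}{1-\alpha q^n}+\frac{1}{1-\beta q^n}\right)\frac{(\alpha\beta;q)_n}{(q;q)_n}(st)^n.
\end{align*}
Because the right-hand side depends on $s,t$ only through the product $st$, establishing this single identity automatically forces the mixed coefficients for $m\neq n$ on the LHS to vanish, which is precisely the orthogonality.

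To evaluate $I(s,t)$, observe that $\omega^{(\alpha,\beta)}(\cos\theta|q)$ depends on $\theta$ only through $e^{\pm2i\theta}$, so it is $\pi$-periodic and its Fourier series in $e^{i\theta}$ has only even modes. The substitution $\phi=2\theta$ shows that the Fourier coefficients in question coincide with those of $\widetilde\omega(\phi):=(e^{i\phi},e^{-i\phi};q)_\infty/(\alpha e^{i\phi},\beta e^{-i\phi};q)_\infty$, and applying \eqref{defn-bi-thm} to the ``$e^{i\phi}$-ratio'' and the ``$e^{-i\phi}$-ratio'' separately writes the $m$-th such coefficient ($m\ge 0$) as $\alpha^m\frac{(1/\alpha;q)_m}{(q;q)_m}\,{}_2\phi_1(q^m/\alpha,\,1/\beta;\,q^{m+1};\,q,\,\alpha\beta)$. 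The critical move is Heine's second transformation, which converts this into
\[
\frac{(q^{m+1}\beta,\alpha;q)_\infty}{(q^{m+1},\alpha\beta;q)_\infty}\,{}_2\phi_1(1/q,\,1/\beta;\,\alpha;\,q,\,q^{m+1}\beta);
\]
because $(1/q;q)_n=0$ for $n\ge 2$, this new ${}_2\phi_1$ terminates after the $n=1$ term and collapses explicitly to $[(1-\alpha)+(1-\beta)q^m]/(1-\alpha)$. Combining the factors yields a closed form for the Fourier coefficient in which the two-term factor $(1-\alpha)+(1-\beta)q^m$ appears naturally.

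The remaining task is to substitute these closed-form Fourier coefficients back into $I(s,t)$, expand the two generating functions by \eqref{defn-bi-thm}, and match the coefficient of $(st)^n$ on both sides. The main obstacle is this final combinatorial step: the two-term factor $(1-\alpha)+(1-\beta)q^m$ produced by Heine's transformation does \emph{not} directly correspond to the two-term decomposition $\frac{1}{1-\alpha q^n}+\frac{1}{1-\beta q^n}$ on the RHS. Bridging the two structures requires a nontrivial resummation of the resulting multi-index sum, which I expect to isolate as an application of \eqref{defn-bi-thm} (possibly combined with a $q$-Saalschütz-type identity) along carefully chosen subsums; once executed, the $\alpha$- and $\beta$-contributions separate cleanly and the coefficient of $(st)^n$ in $I(s,t)$ matches the claimed RHS term by term, completing the proof.
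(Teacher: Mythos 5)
Your reduction of the theorem to the single generating-function identity for
\[
I(s,t)=\int_{0}^{2\pi}\frac{(\alpha se^{i\theta},\beta se^{-i\theta},\alpha te^{i\theta},\beta te^{-i\theta};q)_\infty}{(se^{i\theta},se^{-i\theta},te^{i\theta},te^{-i\theta};q)_\infty}\,\omega^{(\alpha,\beta)}(\cos\theta|q)\,d\theta
\]
is sound in principle (a double power series is a function of $st$ alone precisely when its off-diagonal coefficients vanish), and your computation of the Fourier coefficients of the weight checks out: the $q$-binomial theorem, Heine's second transformation with $abz/c=1/q$, and the termination of ${}_2\phi_1(1/q,1/\beta;\alpha;q,q^{m+1}\beta)$ after its $n=1$ term do yield the factor $(1-\alpha)+(1-\beta)q^m$. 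The genuine gap is that the proof stops exactly where the theorem lives. Once you insert these Fourier coefficients and the expansion $C_m^{(\alpha,\beta)}(e^{i\theta};q)=\sum_{j+k=m}\frac{(\alpha;q)_j(\beta;q)_k}{(q;q)_j(q;q)_k}e^{i(j-k)\theta}$ into $I(s,t)$, the coefficient of $s^mt^n$ becomes a double sum over $k,k'$ weighted by the Fourier coefficient at frequency $m+n-2(k+k')$, and you must prove that this collapses to zero when $m\neq n$ (of equal parity) and to the stated norm when $m=n$. You explicitly defer this (``I expect to isolate \dots once executed''), without naming or verifying the identity that would do it; and as you yourself note, the factor $(1-\alpha)+(1-\beta)q^m$ bears no transparent relation to the target decomposition $\frac{1}{1-\alpha q^n}+\frac{1}{1-\beta q^n}$. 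As written, the argument is a correct preliminary computation attached to a plan, not a proof.

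For contrast, the paper's proof is engineered to avoid this combinatorial collapse altogether. It uses the Al-Salam--Verma $q$-integral representation \eqref{eq-Phi-int} of $\Phi_{n+k}^{(a,b)}$ to convert the weighted series $\sum_{n}C_{n+k}^{(\alpha,\beta)}(e^{i\theta};q)C_{n}^{(\alpha,\beta)}(e^{i\theta};q)\frac{(q;q)_{n+k}}{(\alpha\beta;q)_{n+k}}t^n$ times $\omega^{(\alpha,\beta)}$ into a Jackson $q$-integral, which evaluates to two explicit series supported respectively on the Fourier modes $\{e^{-in\theta}\}_{n\ge k}$ and $\{e^{in\theta}\}_{n\ge k}$. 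Integration over $[0,2\pi]$ then kills everything for $k>0$ at a glance, and for $k=0$ only the constant terms survive; the $q$-binomial theorem evaluates these as $\sum_n t^n/(1-\alpha q^n)$ and $\sum_n t^n/(1-\beta q^n)$, which is exactly where the two-term norm comes from. If you wish to complete your route, the missing resummation is essentially Gasper's original argument and does require a $q$-Saalsch\"utz-type summation of the resulting series; it is feasible, but it is the substance of the proof rather than a finishing detail.
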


Letting $\alpha=\beta$ in Theorem \ref{thm-cq-orth-ab} and making some simple calculations, we find that the Theorem \ref{thm-cq-orth-ab} reduces to Theorem \ref{thm-cq-orth-b}. Hence Theorem \ref{thm-cq-orth-ab} is an extension of the orthogonality relation for the continuous $q$-ultraspherical polynomials. Using Theorem \ref{thm-cq-orth-ab}, we can also derive the following new $q$-beta integral formula with five parameters $\alpha,\beta,s,t$, and $q$.
\begin{theorem}\label{thm-cq-orth-ab-1}
For $\max\{|q|,|\alpha|,|\beta|,|s|,|t|\}<1$, we have
\begin{align*}
&\int_{0}^{2\pi}\frac{(\alpha t e^{i\theta},\beta t e^{-i\theta},\alpha s e^{i\theta},\beta s e^{-i\theta}, e^{2i\theta}, e^{-2i\theta};q)_\infty}
{(t e^{i\theta}, t e^{-i\theta}, s e^{i\theta}, s e^{-i\theta}, \alpha e^{2i\theta}, \beta e^{-2i\theta};q)_\infty}d\theta\nonumber\\
=&\frac{2\pi(\alpha,\beta;q)_\infty}{(q,\alpha\beta;q)_\infty}
\sum_{n=0}^{\infty}\left(\frac{1}{1-\alpha q^n}+\frac{1}{1-\beta q^n}\right)\frac{(\alpha\beta;q)_n}{(q;q)_n}(st)^n.
\end{align*}
\end{theorem}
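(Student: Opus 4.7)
The plan is to recognize the integrand as the product of two generating functions of the polynomials $C_n^{(\alpha,\beta)}(e^{i\theta};q)$ (one in the parameter $t$ and one in $s$) times the weight $\omega^{(\alpha,\beta)}(\cos\theta|q)$, and then to apply the orthogonality relation in Theorem \ref{thm-cq-orth-ab} term by term.

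More precisely, I would first rewrite the integrand as
\begin{align*}
\frac{(\alpha te^{i\theta},\beta te^{-i\theta};q)_\infty}{(te^{i\theta}, te^{-i\theta};q)_\infty}\cdot
\frac{(\alpha se^{i\theta},\beta se^{-i\theta};q)_\infty}{(se^{i\theta}, se^{-i\theta};q)_\infty}\cdot
\omega^{(\alpha,\beta)}(\cos\theta|q),
\end{align*}
and invoke \eqref{defn-cq-ab} with parameters $t$ and $s$ respectively to express the first two factors as
\begin{align*}
\sum_{m=0}^{\infty}C_m^{(\alpha,\beta)}(e^{i\theta};q)t^m
\quad\text{and}\quad
\sum_{n=0}^{\infty}C_n^{(\alpha,\beta)}(e^{i\theta};q)s^n.
\end{align*}
Multiplying these two Taylor series formally gives a double series, and interchanging summation and integration yields
\begin{align*}
\sum_{m,n\ge 0}t^m s^n\int_{0}^{2\pi}C_m^{(\alpha,\beta)}(e^{i\theta};q)C_n^{(\alpha,\beta)}(e^{i\theta};q)\omega^{(\alpha,\beta)}(\cos\theta|q)\,d\theta,
\end{align*}
which by Theorem \ref{thm-cq-orth-ab} collapses onto the diagonal $m=n$, producing exactly the right-hand side claimed.

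The technical step that needs care is the justification of the interchange of sum and integral. I would argue it by dominated convergence or by showing uniform absolute convergence on the compact circle $\{|z|=1\}$: since $\max\{|s|,|t|\}<1$ and the generating series \eqref{defn-cq-ab} converges absolutely for $|t|<1$ on compact subsets of the unit circle in $e^{i\theta}$, and the weight $\omega^{(\alpha,\beta)}(\cos\theta|q)$ is continuous on $[0,2\pi]$ under the hypotheses $|\alpha|,|\beta|<1$, the double series can be integrated term by term. This is the only nontrivial point; the rest is bookkeeping.

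Finally, I would simplify: the cross terms ($m\neq n$) vanish by orthogonality, and on the diagonal the factors collect as
\begin{align*}
\sum_{n=0}^{\infty}(st)^n\cdot\frac{2\pi(\alpha,\beta;q)_\infty}{(q,\alpha\beta;q)_\infty}
\left(\frac{1}{1-\alpha q^n}+\frac{1}{1-\beta q^n}\right)\frac{(\alpha\beta;q)_n}{(q;q)_n},
\end{align*}
which matches the stated identity. I expect no serious obstacle beyond verifying the convergence needed to swap sum and integral; the heavy lifting has already been done in Theorem \ref{thm-cq-orth-ab}.
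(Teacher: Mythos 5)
Your proposal is correct and is essentially the paper's own proof read in the opposite direction: the paper multiplies the orthogonality relation \eqref{eq-cq-orth-ab} by $s^m t^n$, sums over $m$ and $n$, and then recognizes the resulting double sum inside the integral as the product of the two generating functions \eqref{defn-cq-ab}, whereas you expand the integrand first and then apply orthogonality term by term. Your attention to justifying the interchange of summation and integration is a welcome addition, since the paper performs that swap without comment.
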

The paper is organized as follows. In Section \ref{sec-pre}, we recall some asymptotic properties of the $q$-functions $C_n^{(\alpha,\beta)}(e^{i\theta};q)$. We give the proof of Theorems \ref{thm-cq-orth-ab}--\ref{thm-cq-orth-ab-1} in Section \ref{proof-cq-ab}.  In Section \ref{note}, we  prove Theorem \ref{thm-cq-mn} and establish the relation identity for the continuous $q$-ultraspherical polynomials.

\section{Some asymptotic properties of the $q$-functions $C_n^{(\alpha,\beta)}(e^{i\theta};q)$}\label{sec-pre}
It is easy to know that  $(\alpha;q)_n>0$ when $|\alpha|<1$ for nonnegative integer $n$. With the fact that $|e^{i(n-2k)\theta}|=1$, we obtain that
\begin{align*}
|C_n^{(\alpha,\beta)}(e^{i\theta};q)|\leq C_n^{(\alpha,\beta)}(1;q),\quad \textrm{with}  -1<\alpha,\beta<1\textrm{and} -1<q<1.
\end{align*}
For any complex numbers $\alpha$ and $\beta$, the homogeneous polynomials $\Phi_n^{(\alpha,\beta)}(x,y|q)$ are given by \cite[Definition 1.4]{Liu-17}
\begin{align*}
\Phi_n^{(\alpha,\beta)}(x,y|q)=\sum_{k=0}^{n}\left[\begin{matrix}n\\k
\end{matrix}\right]_q(\alpha;q)_k(\beta;q)_{n-k}x^ky^{n-k}.
\end{align*}
where the $q$-binomial coefficients are given by
\begin{align*}
\left[\begin{matrix}n\\k
\end{matrix}\right]_q
=\frac{(q;q)_n}{(q;q)_k(q;q)_{n-k}}.
\end{align*}
We find that
\begin{align}\label{eq-Cq-Phi}
\Phi_{n}^{(\alpha,\beta)}(e^{i\theta},e^{-i\theta}|q)=(q;q)_nC_n^{(\alpha,\beta)}(e^{i\theta};q).
\end{align}
Recall that \cite[Proposition 3.2]{Liu-17},
\begin{align}\label{eq-Phi-xy}
\sum_{n=0}^{\infty}\Phi_n^{(\alpha,\beta)}(x,y|q)\frac{t^n}{(q;q)_n}
=\frac{(\alpha xt,\beta yt;q)_\infty}{(xt,yt;q)_\infty}.
\end{align}
Setting $x=y=1$ in \eqref{eq-Phi-xy}, we have
\begin{align*}
\frac{(\alpha t,\beta t;q)_\infty}{(t;q)_\infty^2}=\sum_{n=0}^{\infty}C_n^{(\alpha,\beta)}(1;q)t^n.
\end{align*}
The radius of convergence of the above power series centered at the origin is the distance to the nearest singularity in the complex plane, namely at 1. Thus, the radius of convergence is 1. Consequently, we have
\begin{align*}
\limsup_{n\rightarrow\infty}C_n^{(\alpha,\beta)}(1;q)^{1/n}=1.
\end{align*}

\begin{prop}\label{prop-cq-condi}
Let $|q|<1$, $|\alpha|<1$, and $|\beta|<1$. Then for any nonnegative integer $k$, and any $t$ such that $|t|<1$, the series
\begin{align*}
\sum_{n=0}^{\infty}C_{n+k}^{(\alpha,\beta)}(e^{i\theta};q)C_{n}^{(\alpha,\beta)}(e^{i\theta};q)
\frac{(q;q)_{n+k}}{(\alpha\beta;q)_{n+k}}t^n
\end{align*}
converges uniformly and absolutely on $[0,2\pi]$.
\end{prop}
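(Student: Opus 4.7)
The plan is a direct Weierstrass M-test on $[0,2\pi]$. The excerpt has just assembled the two ingredients needed: the pointwise uniform bound $|C_n^{(\alpha,\beta)}(e^{i\theta};q)|\le C_n^{(\alpha,\beta)}(1;q)$, and the root estimate $\limsup_{n\to\infty}C_n^{(\alpha,\beta)}(1;q)^{1/n}=1$ coming from the fact that the generating series $\sum_n C_n^{(\alpha,\beta)}(1;q)t^n=(\alpha t,\beta t;q)_\infty/(t;q)_\infty^2$ has its nearest singularity to the origin at $t=1$.

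First I would multiply the two uniform bounds to get, for every $\theta\in[0,2\pi]$,
\[
\bigl|C_{n+k}^{(\alpha,\beta)}(e^{i\theta};q)\,C_n^{(\alpha,\beta)}(e^{i\theta};q)\bigr|\le C_{n+k}^{(\alpha,\beta)}(1;q)\,C_n^{(\alpha,\beta)}(1;q).
\]
Then I would handle the scalar factor $(q;q)_{n+k}/(\alpha\beta;q)_{n+k}$: its numerator converges to $(q;q)_\infty$ and its denominator to the nonzero limit $(\alpha\beta;q)_\infty$ (nonzero since $|\alpha\beta|<1$), so some constant $M>0$ satisfies $|(q;q)_{n+k}/(\alpha\beta;q)_{n+k}|\le M$ for all $n\ge 0$.

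Next I would invoke the root asymptotic to turn it into an exponential bound: for each $\varepsilon>0$ there exists $A=A(\varepsilon)>0$ with $C_n^{(\alpha,\beta)}(1;q)\le A(1+\varepsilon)^n$ for every $n$. Combining the three estimates, the absolute value of the $n$-th term of the series is majorized by
\[
MA^2(1+\varepsilon)^k\bigl((1+\varepsilon)^2|t|\bigr)^n.
\]
Since $|t|<1$, I would choose $\varepsilon$ small enough that $(1+\varepsilon)^2|t|<1$; the resulting geometric majorant is summable and $\theta$-independent, and the M-test delivers absolute and uniform convergence on $[0,2\pi]$. I do not foresee any real obstacle: the only delicate point is absorbing $\varepsilon$ into the geometric rate by using room in $|t|<1$, a standard device for converting a qualitative $\limsup\le 1$ into an effective exponential estimate.
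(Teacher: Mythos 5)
Your proposal is correct and follows essentially the same route as the paper: bound $|C_{n}^{(\alpha,\beta)}(e^{i\theta};q)|$ by $C_{n}^{(\alpha,\beta)}(1;q)$, bound the factor $(q;q)_{n+k}/(\alpha\beta;q)_{n+k}$ by a constant, and use $\limsup_{n\to\infty}C_n^{(\alpha,\beta)}(1;q)^{1/n}=1$ to dominate the series by a convergent geometric series independent of $\theta$. The only cosmetic differences are that the paper bounds the scalar factor by the explicit product $(-|q|;|q|)_\infty/(|\alpha\beta|;|q|)_\infty$ and cites the root test directly rather than unpacking it into your $A(1+\varepsilon)^n$ estimate.
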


\begin{proof}
For $-1<q<1$ and $-1<\alpha,\beta<1$, using the triangular inequality, we easily find that
\begin{align*}
\left|\frac{(q;q)_{n+k}}{(\alpha\beta;q)_{n+k}}\right|\leq\frac{(-|q|;|q|)_\infty}{(|\alpha\beta|;|q|)_\infty}.
\end{align*}
With the help of the above inequality and using the triangular inequality again, we deduce that
\begin{align*}
\left|\sum_{n=0}^{\infty}C_{n+k}^{(\alpha,\beta)}(e^{i\theta};q)C_{n}^{(\alpha,\beta)}(e^{i\theta};q)
\frac{(q;q)_{n+k}}{(\alpha\beta;q)_{n+k}}t^n\right|
\leq\frac{(-|q|;|q|)_\infty}{(|\alpha\beta|;|q|)_\infty}\sum_{n=0}^{\infty}C_{n+k}^{(\alpha,\beta)}(1;q)C_{n}^{(\alpha,\beta)}(1;q)|t|^n.
\end{align*}
Using the above equation and the root test, we know that the series on the right-hand side  converges  for $|t|<1$.
Therefore, the series in Proposition \ref{prop-cq-condi} converges uniformly and absolutely
on $[0,2\pi]$. We thus complete the proof of the proposition by analytic continuation.
\end{proof}

\section{The proof of Theorems \ref{thm-cq-orth-ab}--\ref{thm-cq-orth-ab-1}}\label{proof-cq-ab}
The Jackson $q$-integral of the function $f(x)$ from $a$ to $b$ is defined by \cite[p. 23]{Gasper-Rahman-04}
\begin{align*}
\int_{a}^{b}f(x)d_qx=(1-q)b\sum_{n=0}^{\infty}q^nf(bq^n)-(1-q)a\sum_{n=0}^{\infty}q^nf(aq^n).
\end{align*}
Using the Jackson $q$-integral, Al-Salam and Verma \cite {AlSalam-Verma-PAMS-82} rewrote Sears' nonterminating
extension of the $q$-Saalsch\"{u}tz summation in the following beautiful form.

\begin{prop}
If there are no zero factors in the denominator of the integral and \\
$\max\{|a|,|b|,|cx|,|cy|,|ax/y|,|by/x|\}<1$, then we have
\begin{align*}
\int_{x}^{y}\frac{(qz/x,qa/y,abcz;q)_\infty}{(az/y,bz/x,cz;q)_\infty}d_qz
=\frac{(1-q)y(q,x/y,qy/x,ab,acx,bcy;q)_\infty}{(ax/y,by/x,a,b,cx,cy;q)_\infty}.
\end{align*}
\end{prop}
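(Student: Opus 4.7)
The plan is to reduce the Jackson $q$-integral identity to Sears' nonterminating $q$-Saalsch\"{u}tz summation formula. Set $F(z)=(qz/x,\,qa/y,\,abcz;q)_\infty/(az/y,\,bz/x,\,cz;q)_\infty$ and expand the integral via its definition,
\[
\int_x^y F(z)\,d_qz = (1-q)y\sum_{n=0}^\infty q^nF(yq^n)-(1-q)x\sum_{n=0}^\infty q^nF(xq^n).
\]
I then apply the elementary identity $(\alpha q^n;q)_\infty=(\alpha;q)_\infty/(\alpha;q)_n$ to every $q$-shifted infinite product inside each summand, pulling all $n$-independent Pochhammer products outside the sums. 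The result takes the shape
\[
(1-q)y\,\frac{(qy/x,\,qa/y,\,abcy;q)_\infty}{(a,\,by/x,\,cy;q)_\infty}\,\Sigma_1 \;-\; (1-q)x\,\frac{(q,\,qa/y,\,abcx;q)_\infty}{(ax/y,\,b,\,cx;q)_\infty}\,\Sigma_2,
\]
with
\[
\Sigma_1 = \sum_{n\ge 0}\frac{(a,\,by/x,\,cy;q)_n}{(qy/x,\,abcy;q)_n}q^n, \qquad
\Sigma_2 = \sum_{n\ge 0}\frac{(ax/y,\,b,\,cx;q)_n}{(q,\,abcx;q)_n}q^n.
\]

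Both $\Sigma_1$ and $\Sigma_2$ are basic-hypergeometric series with argument $q$ satisfying the Saalsch\"{u}tzian $1$-balance condition (once the implicit factor $(q;q)_n$ is counted correctly). Sears' nonterminating $q$-Saalsch\"{u}tz identity (see, e.g., Gasper--Rahman, Eq.~(2.10.12)) evaluates precisely the linear combination of two such series that arises here as a product of $(\cdot;q)_\infty$ factors. Substituting our parameters yields
\[
(1-q)y\left[\frac{(qy/x,\,qa/y,\,abcy;q)_\infty}{(a,\,by/x,\,cy;q)_\infty}\,\Sigma_1 - \frac{x}{y}\cdot\frac{(q,\,qa/y,\,abcx;q)_\infty}{(ax/y,\,b,\,cx;q)_\infty}\,\Sigma_2\right] = \frac{(1-q)y(q,\,x/y,\,qy/x,\,ab,\,acx,\,bcy;q)_\infty}{(ax/y,\,by/x,\,a,\,b,\,cx,\,cy;q)_\infty},
\]
which is exactly the claimed identity. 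The convergence hypothesis $\max\{|a|,|b|,|cx|,|cy|,|ax/y|,|by/x|\}<1$ guarantees absolute convergence of both $\Sigma_1$ and $\Sigma_2$, which in turn legitimizes the term-by-term rearrangements and the factorization of infinite products.

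The principal obstacle is the parameter bookkeeping at the Sears step. The two Jackson endpoints produce series with slightly asymmetric denominator structures: the $x$-endpoint yields a genuine $(q;q)_n$ in the denominator of $\Sigma_2$, while the $y$-endpoint yields $(qy/x;q)_n$ in the denominator of $\Sigma_1$. To match these against the canonical Sears pair one must apply either an index shift or a preliminary Heine-type transformation to one of the two series, and then verify that the $n$-independent prefactors combine into exactly the ratio appearing on the right-hand side. Should this matching prove cumbersome, a clean back-up strategy is to fix all parameters except $c$, check that both sides of the claim satisfy the same first-order $q$-difference equation in $c$ on a common domain of analyticity, and pin down their common constant by evaluating at $c=0$, where the identity collapses to the Andrews--Askey $q$-beta integral; this alternative route avoids Sears' formula entirely at the cost of establishing the $q$-difference equation.
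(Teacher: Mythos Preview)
The paper does not supply its own proof of this proposition; it merely quotes it as Al-Salam and Verma's reformulation of Sears' nonterminating extension of the $q$-Saalsch\"{u}tz summation, citing their paper. Your approach---expand the Jackson integral at its two endpoints and recognize the resulting linear combination of balanced ${}_3\phi_2$ series as Sears' identity---is precisely that reformulation run in reverse, so it coincides with what the paper (and Al-Salam--Verma) have in mind.

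One remark on the ``principal obstacle'' you flag. The asymmetry you notice between $\Sigma_1$ and $\Sigma_2$ (only the $x$-endpoint producing a genuine $(q;q)_n$ in the denominator) is an artifact of a typographical error in the statement as printed: the numerator of the integrand should read $(qz/x,\,qz/y,\,abcz;q)_\infty$, not $(qz/x,\,qa/y,\,abcz;q)_\infty$. Once the middle factor is corrected to $qz/y$, evaluating at $z=yq^n$ contributes $(q^{n+1};q)_\infty=(q;q)_\infty/(q;q)_n$ to the $y$-endpoint as well, so that
\[
\Sigma_1={}_3\phi_2\!\left[\begin{matrix} a,\,by/x,\,cy\\ qy/x,\,abcy\end{matrix};q,q\right],
\qquad
\Sigma_2={}_3\phi_2\!\left[\begin{matrix} ax/y,\,b,\,cx\\ qx/y,\,abcx\end{matrix};q,q\right],
\]
both of which are balanced. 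This is exactly the Sears pair, and the prefactors combine to the stated right-hand side with no auxiliary Heine transformation or index shift needed. Your back-up $q$-difference argument in $c$ would also work but is unnecessary once the typo is fixed.
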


Using this proposition, we can find the following $q$-integral representation
for $\Phi_n^{(a,b)}(x,y|q)$ \cite[Proposition 5.1]{Liu-17}.
\begin{prop}
If there are no zero factors in the denominator of the integral, then we have
\begin{align}\label{eq-Phi-int}
\Phi_n^{(a,b)}(x,y|q)
=\frac{(ab;q)_n(a,b,by/x,ax/y;q)_\infty}{(1-q)y(q,ab,x/y,qy/x;q)_\infty}
\int_{x}^{y}\frac{(qz/x,qz/y;q)_\infty z^n}{(bz/x,az/y;q)_\infty}d_qz.
\end{align}
\end{prop}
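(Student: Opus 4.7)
The plan is to derive the claimed $q$-integral representation as a direct consequence of the Al-Salam--Verma identity stated just above, using the generating function \eqref{eq-Phi-xy} for $\Phi_n^{(a,b)}(x,y|q)$ as the bridge. Concretely, I would multiply the right-hand side of the proposition by $t^n/(q;q)_n$ and sum over $n\ge 0$. Pulling the (finite-parameter) prefactor out of the sum and interchanging the $q$-integral with the infinite sum, the kernel acquires the extra factor $\sum_{n\ge 0}(ab;q)_n (zt)^n/(q;q)_n$, which by the $q$-binomial theorem \eqref{defn-bi-thm} equals $(abzt;q)_\infty/(zt;q)_\infty$.

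At this point the integrand becomes
\begin{align*}
\frac{(qz/x,\,qz/y,\,abzt;q)_\infty}{(bz/x,\,az/y,\,zt;q)_\infty},
\end{align*}
which is exactly the shape to which the Al-Salam--Verma evaluation (the preceding proposition) applies with the parameter $c$ specialized to $t$. Substituting its closed form and canceling the common block $(a,b,by/x,ax/y,q,ab,x/y,qy/x;q)_\infty$ with the prefactor leaves precisely $(atx,bty;q)_\infty/(tx,ty;q)_\infty$. By \eqref{eq-Phi-xy}, this is the generating function $\sum_{n\ge 0}\Phi_n^{(a,b)}(x,y|q)\,t^n/(q;q)_n$. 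Comparing coefficients of $t^n$ then yields the asserted representation for $\Phi_n^{(a,b)}(x,y|q)$.

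The main technical point, and the step most likely to cause trouble, is the justification for swapping the $q$-integral with the series in $t$. Since the Jackson $q$-integral is a bilateral sum over the two geometric scales $\{xq^k\}$ and $\{yq^k\}$, this interchange requires a suitable range for $t$ together with the constraints $\max\{|a|,|b|,|tx|,|ty|,|ax/y|,|by/x|\}<1$ that already appear in Al-Salam--Verma; a standard Tonelli/Weierstrass $M$-test argument on each geometric progression delivers absolute and uniform convergence in a polydisc of the parameters. Once the identity is established in that polydisc, analytic continuation in $a,b,x,y$ (treating both sides as meromorphic functions with the same polar divisors, none of which appear in the denominator by hypothesis) upgrades it to the full range stated in the proposition, and the coefficient-extraction step is a routine application of Cauchy's theorem in $t$.
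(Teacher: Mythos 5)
Your proposal is correct and is precisely the derivation the paper intends: it states the representation as a consequence of the Al-Salam--Verma integral (citing Liu's Proposition 5.1), and the standard route is exactly yours --- expand $(abcz;q)_\infty/(cz;q)_\infty$ and $(acx,bcy;q)_\infty/(cx,cy;q)_\infty$ via the $q$-binomial theorem and \eqref{eq-Phi-xy}, then equate coefficients of the free parameter. The only caveat is a typo in the paper's statement of the Al-Salam--Verma formula ($qa/y$ should read $qz/y$ in the numerator), which your computation implicitly and correctly repairs.
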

\subsection{The proof of Theorem \ref{thm-cq-orth-ab}}
\begin{proof}
Let $k$ be a nonnegative integer. Replacing $n$ by $n+k$ \eqref{eq-Phi-int}
and then dividing both sides by $(ab;q)_n$ in the resulting equation, we obtain
\begin{align*}
\frac{\Phi_{n+k}^{(a,b)}(x,y|q)}{(ab;q)_{n+k}}
=\frac{(a,b,by/x,ax/y;q)_\infty}{(1-q)y(q,ab,x/y,qy/x;q)_\infty}
\int_{x}^{y}\frac{(qz/x,qz/y;q)_\infty z^{n+k}}{(bz/x,az/y;q)_\infty}d_qz.
\end{align*}
Multiplying both sides by $\Phi_n^{(c,d)}(u,v|q)t^n/(q;q)_n$
and then summing the resulting equation over n from $0$ to $\infty$, we arrive at
\begin{align*}
&\sum_{n=0}^{\infty}\Phi_{n+k}^{(a,b)}(x,y|q)\Phi_{n}^{(c,d)}(u,v|q)
\frac{t^n}{(q;q)_n(ab;q)_{n+k}}\\
=&\frac{(a,b,by/x,ax/y;q)_\infty}{(1-q)y(q,ab,x/y,qy/x;q)_\infty}
\int_{x}^{y}\frac{(qz/x,qz/y,cutz,dvtz;q)_\infty z^k}{(bz/x,az/y,utz,vtz;q)_\infty}d_qz.
\end{align*}
Setting $x=u=e^{i\theta}$,$y=v=e^{-i\theta}$, $a=c=\alpha$, and $b=d=\beta$ in the above equation, we deduce that for $|q|<1, |\alpha|<1, |\beta|<1$, and $|t|<1$,
\begin{align*}
&\frac{(q,\alpha\beta,e^{2i\theta},e^{-2i\theta};q)_\infty}
{(\alpha,\beta,\alpha e^{2i\theta},\beta e^{-2i\theta};q)_\infty}
\sum_{n=0}^{\infty}C_{n+k}^{(\alpha,\beta)}(e^{i\theta};q)C_{n}^{(\alpha,\beta)}(e^{i\theta};q)
\frac{(q;q)_{n+k}}{(\alpha\beta;q)_{n+k}}t^n\\
=&\frac{e^{i\theta}(1-e^{-2i\theta})}{(1-q)}
\int_{e^{i\theta}}^{e^{-i\theta}}\frac{(qze^{i\theta},qze^{-i\theta},\alpha tze^{i\theta},\beta tze^{-i\theta};q)_\infty z^k}{(\alpha ze^{i\theta},\beta ze^{-i\theta}, tze^{i\theta}, tze^{-i\theta};q)_\infty}d_qz\\
=&(1-e^{-2i\theta})e^{-ik\theta}\sum_{n=0}^{\infty}\frac{(q^{n+1},\alpha tq^n,q^{n+1}e^{-2i\theta},\beta tq^ne^{-2i\theta};q)_\infty}{(\alpha q^{n}, tq^n,\beta q^{n}e^{-2i\theta}, tq^ne^{-2i\theta};q)_\infty}q^{n(k+1)}\\
&\quad +(1-e^{2i\theta})e^{ik\theta}\sum_{n=0}^{\infty}\frac{(q^{n+1},\beta tq^n,q^{n+1}e^{2i\theta},\alpha tq^ne^{2i\theta};q)_\infty}{(\beta q^{n}, tq^n,\alpha q^{n}e^{2i\theta}, tq^ne^{2i\theta};q)_\infty}q^{n(k+1)}.
\end{align*}
Inspecting the first series in the above equation, we see that this series can be expanded
in terms of $\{e^{-ni\theta}\}_{n=k}^\infty$. Hence there exists a sequence $\{\lambda_n\}_{n=k}^\infty$
independent of $\theta$ such that
\begin{align}\label{eq-sum-lambda}
(1-e^{-2i\theta})e^{-ik\theta}\sum_{n=0}^{\infty}\frac{(q^{n+1},\alpha tq^n,q^{n+1}e^{-2i\theta},\beta tq^ne^{-2i\theta};q)_\infty}{(\alpha q^{n}, tq^n,\beta q^{n}e^{-2i\theta}, tq^ne^{-2i\theta};q)_\infty}q^{n(k+1)}
=\sum_{n=k}^{\infty}\lambda_ne^{-in\theta}.
\end{align}
Similarly, there exists a sequence $\{\delta_n\}_{n=k}^\infty$ independent of $\theta$ such that
\begin{align*}
(1-e^{2i\theta})e^{ik\theta}\sum_{n=0}^{\infty}\frac{(q^{n+1},\beta tq^n,q^{n+1}e^{2i\theta},\alpha tq^ne^{2i\theta};q)_\infty}{(\beta q^{n}, tq^n,\alpha q^{n}e^{2i\theta}, tq^ne^{2i\theta};q)_\infty}q^{n(k+1)}
=\sum_{n=k}^{\infty}\delta_ne^{in\theta}.
\end{align*}
Adding the above two equations together yields
\begin{align*}
&\frac{(q,\alpha\beta,e^{2i\theta},e^{-2i\theta};q)_\infty}{(\alpha,\beta,\alpha e^{2i\theta},\beta e^{-2i\theta};q)_\infty}
\sum_{n=0}^{\infty}C_{n+k}^{(\alpha,\beta)}(e^{i\theta};q)C_{n}^{(\alpha,\beta)}(e^{i\theta};q)
\frac{(q;q)_{n+k}}{(\alpha\beta;q)_{n+k}}t^n\\
=&\sum_{n=k}^{\infty}\lambda_ne^{-in\theta}+\sum_{n=k}^{\infty}\delta_ne^{in\theta}.
\end{align*}
By proposition \ref{prop-cq-condi}, we know that the series on the left-hand side converges uniformly and absolutely on $[0,2\pi]$. If we integrate the above equation with respect to $\theta$ over $[0,2\pi]$ term by term, and using the known fact
\begin{align*}
\int_{0}^{2\pi}e^{in\theta}d\theta=2\pi\delta_{n,0},
\end{align*}
we deduce that for $k>0$,
\begin{align*}
\sum_{n=0}^{\infty}\left\{\int_{0}^{2\pi}C_{n+k}^{\alpha,\beta}(e^{i\theta};q)C_{n}^{\alpha,\beta}(e^{i\theta};q)
\frac{(e^{2i\theta},e^{-2i\theta};q)_\infty}{(\alpha e^{2i\theta},\beta e^{-2i\theta};q)_\infty}d\theta\right\}
\frac{(q;q)_{n+k}}{(\alpha\beta;q)_{n+k}}t^n=0.
\end{align*}
By equating the coefficients of powers of $t$ on  both sides of the above equation, we find that for $k>0$,
\begin{align}\label{eq-Cq-nk}
\int_{0}^{2\pi}C_{n+k}^{(\alpha,\beta)}(e^{i\theta};q)C_{n}^{(\alpha,\beta)}(e^{i\theta};q)
\frac{(e^{2i\theta},e^{-2i\theta};q)_\infty}{(\alpha e^{2i\theta},\beta e^{-2i\theta};q)_\infty}d\theta=0.
\end{align}
For $k=0$, we have
\begin{align*}
\sum_{n=0}^{\infty}\left\{\int_{0}^{2\pi}C_{n}^{(\alpha,\beta)}(e^{i\theta};q)^2
\frac{(e^{2i\theta},e^{-2i\theta};q)_\infty}{(\alpha e^{2i\theta},\beta e^{-2i\theta};q)_\infty}d\theta\right\}
\frac{(q;q)_{n}}{(\alpha\beta;q)_{n}}t^n
=2\pi\frac{(\alpha,\beta;q)_\infty}{(q,\alpha\beta;q)_\infty}(\lambda_0+\delta_0).
\end{align*}
Now we begin to compute $\lambda_0$ and $\delta_0$. Inspecting the series in \eqref{eq-sum-lambda}, we find that the constant term of the fourier expansion of this series is
\begin{align*}
\lambda_0=\sum_{n=0}^{\infty}\frac{(q^{n+1},\alpha tq^n;q)_\infty}{(\alpha q^n, tq^n;q)_\infty}q^n.
\end{align*}
Applying the $q$-binomial theorem \eqref{defn-bi-thm} to the right-hand side of the above equation, we find that
\begin{align*}
\lambda_0=\sum_{n=0}^{\infty}\frac{t^n}{1-\alpha q^n}.
\end{align*}
In the same way, we have
\begin{align*}
\delta_0=\sum_{n=0}^{\infty}\frac{t^n}{1-\beta q^n}.
\end{align*}
It follows that
\begin{align*}
&\sum_{n=0}^{\infty}\left\{\int_{0}^{2\pi}C_{n}^{(\alpha,\beta)}(e^{i\theta};q)^2
\frac{(e^{2i\theta},e^{-2i\theta};q)_\infty}{(\alpha e^{2i\theta},\beta e^{-2i\theta};q)_\infty}d\theta\right\}
\frac{(q;q)_{n}}{(\alpha\beta;q)_{n}}t^n\\
=&2\pi\frac{(\alpha,\beta;q)_\infty}{(q,\alpha\beta;q)_\infty}
\sum_{n=0}^{\infty}\left(\frac{1}{1-\alpha q^n}+\frac{1}{1-\beta q^n}\right)t^n.
\end{align*}
Equating the coefficients of $t^n$ on both sides of the above equation, we arrive at
\begin{align*}
\int_{0}^{2\pi}C_{n}^{(\alpha,\beta)}(e^{i\theta};q)^2
\frac{(e^{2i\theta},e^{-2i\theta};q)_\infty}{(\alpha e^{2i\theta},\beta e^{-2i\theta};q)_\infty}d\theta
=2\pi\frac{(\alpha,\beta;q)_\infty(\alpha\beta;q)_n}{(q,\alpha\beta;q)_\infty(q;q)_n}
\left(\frac{1}{1-\alpha q^n}+\frac{1}{1-\beta q^n}\right).
\end{align*}
Combining the above equation with \eqref{eq-Cq-nk}, we complete the proof of Theorem \ref{thm-cq-orth-ab}.
\end{proof}
\subsection{The proof of Theorem \ref{thm-cq-orth-ab-1}}
\begin{proof}
Multiplying  on both sides of \eqref{eq-cq-orth-ab} by $s^mt^n$, we have
\begin{align*}
&\int_{0}^{2\pi}C_{m}^{(\alpha,\beta)}(e^{i\theta};q)C_{n}^{(\alpha,\beta)}(e^{-i\theta};q)
\frac{(e^{2i\theta},e^{-2i\theta};q)_\infty}{(\alpha e^{2i\theta},\beta e^{-2i\theta};q)_\infty}s^mt^nd\theta\\
=&2\pi\frac{(\alpha,\beta;q)_\infty}{(q,\alpha\beta;q)_\infty}
\left(\frac{1}{1-\alpha q^n}+\frac{1}{1-\beta q^n}\right)\frac{(\alpha\beta;q)_n}{(q;q)_n}s^mt^n\delta_{m,n}.
\end{align*}
Summing the above equation over $m$ and $n$ from $m=0$ to $m=\infty$ and $n=0$ to $n=\infty$, we conclude that
\begin{align*}
&\int_{0}^{2\pi}\left(\sum_{m=0}^{\infty}C_{m}^{(\alpha,\beta)}(e^{i\theta};q)s^m\right)
\left(\sum_{n=0}^{\infty}C_{n}^{(\alpha,\beta)}(e^{-i\theta};q)t^n\right)
\frac{(e^{2i\theta},e^{-2i\theta};q)_\infty}{(\alpha e^{2i\theta},\beta e^{-2i\theta};q)_\infty}d\theta\\
=&2\pi\frac{(\alpha,\beta;q)_\infty}{(q,\alpha\beta;q)_\infty}
\sum_{n=0}^{\infty}\left(\frac{1}{1-\alpha q^n}+\frac{1}{1-\beta q^n}\right)\frac{(\alpha\beta;q)_n}{(q;q)_n}(st)^n.
\end{align*}
Applying \eqref{defn-cq-ab} to the left-hand side of the above equation, we complete the proof of Theorem \ref{thm-cq-orth-ab-1}.
\end{proof}

\section{A note on the continuous $q$-ultraspherical polynomials}\label{note}
The $q$-hypergeometric series, denoted as ${}_{r+1}\phi_{r}(\cdot)$, 
is defined by the basic hypergeometric series \cite{Gasper-Rahman-04}
\begin{align*}
{}_{r+1}\phi_{r}\bigg(\genfrac{}{}{0pt}{}{a_1,a_2,\cdots,a_{r+1}}{b_1,b_2,\cdots,b_{r}};q,z\bigg)
=\sum_{n=0}^{\infty}\frac{(a_1,a_2,\cdots,a_{r+1};q)_n}{(q,b_1,b_2,\cdots,b_{r};q)_n}z^n,
\end{align*}
and we will also use the notation ${}_{r+1}W_{r}(\cdot)$ to denote the very-well poised series \cite{Gasper-Rahman-04}
\begin{align*}
{}_{r+1}W_{r}(a_1;a_2,\cdots,a_{r-1};q,z)
={}_{r+1}\phi_{r}\bigg(\genfrac{}{}{0pt}{}{a_1,q\sqrt{a_1},-q\sqrt{a_1},a_2,\cdots,a_{r-1}}{\sqrt{a_1},-\sqrt{a_1},qa_1/a_2,\cdots,qa_1/a_{r-1}};q,z\bigg).
\end{align*}
The Rogers ${}_{6}\phi_{5}(\cdot)$ summation formula is one of the most important results
in $q$-series, as presented in \cite[Eq. (2.7.1)]{Gasper-Rahman-04}: for $|aq/{bcd}|<1$,
\begin{align*}
{}_{6}W_{5}(a;b,c,d;q,aq/{bcd})
=\frac{(aq,aq/{bc},aq/{cd},aq/{bd};q)_\infty}{(aq/b,aq/c,aq/d,aq/{bcd};q)_\infty}.
\end{align*}
In this section, we will prove Theorem \ref{thm-cq-mn} by using the orthogonality relation
 for the continuous $q$-ultraspherical polynomials and the Rogers ${}_{6}\phi_{5}(\cdot)$ summation formula \cite[Eq. (2.7.1)]{Gasper-Rahman-04}.

\begin{proof}[The proof of Theorem \ref{thm-cq-mn}]
Using the generating function for the continuous $q$-ultraspherical polynomials in \eqref{defn-c-beta}
 and through  direct computation, we find that
\begin{align}\label{eq-cq-b-2}
\sum_{n=0}^{\infty}(1-\beta q^n)C_n(\cos\theta;\beta|q)t^n
=\frac{(\beta tqe^{i\theta},\beta tqe^{-i\theta};q)_\infty}
{(te^{i\theta},te^{-i\theta};q)_\infty}(1-\beta)(1-\beta t^2).
\end{align}
Multiplying both sides of \eqref{eq-cq-orth-b} by $(1-\beta q^n)t^n$
and then summing the resulting equation over $n$ from $0$ to $\infty$,
we can then apply the above equation to deduce that
\begin{align*}
&\int_{0}^{\pi}C_m(\cos\theta;\beta|q)\omega_\beta(\cos\theta|q)
\frac{(\beta tqe^{i\theta},\beta tqe^{-i\theta};q)_\infty}
{(te^{i\theta},te^{-i\theta};q)_\infty}(1-\beta t^2)d\theta\\
=&2\pi\frac{(\beta,q\beta;q)_\infty}{(q,\beta^2;q)_\infty}\frac{(\beta^2;q)_m}{(q;q)_m}t^m.
\end{align*}
Replacing $t$ with $tq^n$ in the above equation yields
\begin{align*}
&\int_{0}^{\pi}C_m(\cos\theta;\beta|q)\omega_\beta(\cos\theta|q)
\frac{(\beta tqe^{i\theta},\beta tqe^{-i\theta};q)_\infty}
{(te^{i\theta},te^{-i\theta};q)_\infty}
\frac{(te^{i\theta}, te^{-i\theta};q)_n}
{(\beta tqe^{i\theta},\beta tqe^{-i\theta};q)_n}(1-\beta t^2q^{2n})d\theta\\
=&2\pi\frac{(\beta,q\beta;q)_\infty}{(q,\beta^2;q)_\infty}\frac{(\beta^2;q)_m}{(q;q)_m}(tq^n)^m.
\end{align*}
Multiplying both sides of the above equation by
\begin{align*}
\frac{(q\beta/\gamma,\beta t^2;q)_n}{(1-\beta t^2)(q,\gamma t^2;q)_n}\gamma^n,
\end{align*}
and then summing the resulting equation from $n=0$ to $n=\infty$, we find that
\begin{align*}
&\int_{0}^{\pi}C_m(\cos\theta;\beta|q){}_{6}W_{5}(\beta t^2;q\beta/\gamma,te^{i\theta},te^{-i\theta};q,\gamma)
\frac{(\beta tqe^{i\theta},\beta tqe^{-i\theta};q)_\infty}
{(te^{i\theta},te^{-i\theta};q)_\infty}\omega_\beta(\cos\theta|q)d\theta\\
=&2\pi\frac{(\beta,q\beta;q)_\infty}{(q,\beta^2;q)_\infty}
\frac{(\beta^2;q)_m}{(1-\beta t^2)(q;q)_m}t^m
\sum_{n=0}^{\infty}\frac{(q\beta/\gamma,\beta t^2;q)_n}{(q,\gamma t^2;q)_n}(\gamma q^m)^n.
\end{align*}
Using the Rogers ${}_{6}\phi_{5}(\cdot)$ summation formula \cite[Eq. (2.7.1)]{Gasper-Rahman-04}, we deduce that
\begin{align*}
{}_{6}W_{5}(\beta t^2;q\beta/\gamma,te^{i\theta},te^{-i\theta};q,\gamma)
=\frac{(\beta q,q\beta t^2,\gamma te^{i\theta},\gamma te^{-i\theta};q)_\infty}
{(\gamma,\gamma t^2,\beta tqe^{i\theta},\beta tqe^{-i\theta};q)_\infty}.
\end{align*}
Combining the above two equations, we obtain that
\begin{align*}
&\int_{0}^{\pi}C_m(\cos\theta;\beta|q)
\frac{(\gamma te^{i\theta},\gamma te^{-i\theta};q)_\infty}
{(te^{i\theta}, te^{-i\theta};q)_\infty}\omega_\beta(\cos\theta|q)d\theta\\
=&2\pi\frac{(\beta,\gamma;q)_\infty}{(q,\beta^2;q)_\infty}
\frac{(\beta^2;q)_m}{(q;q)_m}t^m
\sum_{n=0}^{\infty}\frac{(q\beta/\gamma;q)_n(\gamma t^2q^n;q)_\infty}{(q;q)_n(\beta t^2q^n;q)_\infty}(\gamma q^m)^n.
\end{align*}
By the $q$-binomial theorem \eqref{defn-bi-thm}, we have
\begin{align*}
\frac{(\gamma t^2q^n;q)_\infty}{(\beta t^2q^n;q)_\infty}
=\sum_{m=0}^{\infty}\frac{(\gamma/\beta;q)_m}{(q;q)_m}(\beta t^2q^n)^m.
\end{align*}
Using the above equation and the $q$-binomial theorem \eqref{defn-bi-thm} again, we conclude that
\begin{align}\label{eq-beta-gamma}
\sum_{n=0}^{\infty}\frac{(q\beta/\gamma;q)_n(\gamma t^2q^n;q)_\infty}{(q;q)_n(\beta t^2q^n;q)_\infty}(\gamma q^m)^n
=&\sum_{j=0}^{\infty}\frac{(\gamma/\beta;q)_j}{(q;q)_j}(\beta t^2)^j\sum_{n=0}^{\infty}\frac{(q\beta/\gamma)_n}{(q;q)_n}(\gamma q^{m+j})^n\\
=&\sum_{j=0}^{\infty}\frac{(\gamma/\beta;q)_j(\beta q^{m+j+1};q)_\infty}{(q;q)_j(\gamma q^{m+j};q)_\infty}\beta^j t^{2j}\nonumber\\
=&\frac{(\beta;q)_\infty}{(\gamma;q)_\infty}\sum_{j=0}^{\infty}\frac{(\gamma/\beta;q)_j(\gamma;q)_{m+j}}{(q;q)_j(\beta;q)_{m+j+1}}\beta^j t^{2j}\nonumber.
\end{align}
It follows that
\begin{align*}
&\int_{0}^{\pi}C_m(\cos\theta;\beta|q)
\frac{(\gamma te^{i\theta},\gamma te^{-i\theta};q)_\infty}
{(te^{i\theta}, te^{-i\theta};q)_\infty}\omega_\beta(\cos\theta|q)d\theta\\
=&2\pi\frac{(\beta,\beta;q)_\infty}{(q,\beta^2;q)_\infty}
\frac{(\beta^2;q)_m}{(q;q)_m}t^m
\sum_{n=0}^{\infty}\frac{(\gamma/\beta;q)_j(\gamma;q)_{m+j}}{(q;q)_j(\beta;q)_{m+j+1}}\beta^j t^{2j+m}.
\end{align*}
Substituting
\begin{align*}
\frac{(\gamma te^{i\theta},\gamma te^{-i\theta};q)_\infty}
{(tqe^{i\theta}, tqe^{-i\theta};q)_\infty}
=\sum_{n=0}^{\infty}C_n(\cos\theta;\gamma|q)t^n
\end{align*}
into the above equation and interchanging $m$ and $n$, we conclude that
\begin{align*}
&\sum_{m=0}^{\infty}\left\{\int_{0}^{\pi}C_n(\cos\theta;\beta|q)C_m(\cos\theta;\gamma|q)\omega_\beta(\cos\theta|q)d\theta\right\}t^m\\
=&2\pi\frac{(\beta,\beta;q)_\infty(\beta^2;q)_n}{(q,\beta^2;q)_\infty(q;q)_n}
\sum_{j=0}^{\infty}\frac{(\gamma/\beta;q)_j(\gamma;q)_{n+j}}{(q;q)_j(\beta;q)_{n+j+1}}\beta^jt^{2j+n}.
\end{align*}
Equating the coefficients of power of $t$, we complete the proof of Theorem \ref{thm-cq-mn}.
\end{proof}

Using Theorem \ref{thm-cq-mn} and the orthogonality relation for the continuous $q$-ultraspherical polynomials,
 we can easily derive the following connection formula for the continuous $q$-ultraspherical polynomials
 due to Rogers \cite{Rogers-PLM-95}:
\begin{align*}
C_n(\cos\theta;\gamma|q)
=\sum_{k=0}^{\lfloor\frac{n}{2}\rfloor}\frac{\beta^k(\gamma/\beta;q)_k(\gamma;q)_{n-k}(1-\beta q^{n-2k})}{(q;q)_k(\beta q;q)_{n-k}(1-\beta)}C_{n-2k}(\cos\theta;\beta|q).
\end{align*}
Rogers \cite{Rogers-PLM-95} obtained this formula by determining the coefficients for small values of $n$,
and then proving it by induction. Askey and Ismail \cite{Askey-Ismail-83} proved this formula
by first computing an integral involving the product of the continuous $q$-ultraspherical polynomials
and the Chebyshev polynomials of the first kind. They subsequently applied the orthogonality relation
for the continuous $q$-ultraspherical polynomials and the Rogers ${}_{6}\phi_{5}(\cdot)$ summation formula \cite[Eq. (2.7.1)]{Gasper-Rahman-04}.

If we know the Rogers' connection formula in advance, we can easily prove Theorem \ref{thm-cq-mn}
by combining the Rogers' connection formula with the orthogonality relation
for the continuous $q$-ultraspherical polynomials.

For $k$ being a positive integer, by setting $m=2k+n$ in Theorem \ref{thm-cq-mn},
dividing both sides of the resulting equation by $2(1-\gamma)$,
and taking the limit as $\gamma\rightarrow1$ while using \cite[Eq. (3.14)]{Askey-Wilson-85},
we arrive at the following integral formula formula due to Askey and Ismail \cite[Eq. (4.12)]{Askey-Ismail-83}:
\begin{align*}
&\int_{0}^{\pi}C_n(\cos\theta;\beta|q)T_{n+2k}(\cos\theta)W_\beta(\cos\theta)d\theta\\
=&\pi\left[\begin{matrix}n+k\\k\end{matrix}\right]_q
\frac{(1-q^{n+2k})}{(1-q^{n+k})}\frac{(\beta,\beta q^{n+k+1};q)_\infty}{(q,\beta^2q^n;q)_\infty}\beta^k(\beta^{-1};q)_k,\quad k=1,2,\cdots,
\end{align*}
where $T_n(\cos\theta)=\cos {n\theta}$ are the Chebyshev polynomials of the first kind.

Using the Rogers ${}_{6}\phi_{5}(\cdot)$ summation formula \cite[Eq. (2.7.1)]{Gasper-Rahman-04},
we can prove the following proposition.
\begin{prop}\label{prop-cq-rela}
For $|t|<1$, we have
\begin{align*}
\sum_{n=0}^{\infty}C_n(\cos\theta;\gamma|q)t^n
=\sum_{j,m=0}^{\infty}\frac{(1-\beta q^m)(\gamma/\beta;q)_j(\gamma;q)_{j+m}\beta^j}{(q;q)_j(\beta;q)_{m+j+1}}C_m(\cos\theta;\beta|q)t^{m+2j}.
\end{align*}
\end{prop}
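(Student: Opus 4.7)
The plan is to extract Proposition \ref{prop-cq-rela} as a generating-function identity, by running the same mechanism that powered the proof of Theorem \ref{thm-cq-mn} but \emph{without} integrating against $\omega_\beta(\cos\theta|q)$. In that way, the polynomials $C_m(\cos\theta;\beta|q)$ survive on the right-hand side rather than being singled out by orthogonality.

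I would start from \eqref{eq-cq-b-2} with $t$ replaced by $tq^n$ and rewrite the resulting ratio of $q$-shifted factorials as
\begin{align*}
\frac{(\beta tq^{n+1}e^{i\theta},\beta tq^{n+1}e^{-i\theta};q)_\infty}{(tq^ne^{i\theta},tq^ne^{-i\theta};q)_\infty}
=\frac{(\beta tqe^{i\theta},\beta tqe^{-i\theta};q)_\infty}{(te^{i\theta},te^{-i\theta};q)_\infty}\cdot\frac{(te^{i\theta},te^{-i\theta};q)_n}{(\beta tqe^{i\theta},\beta tqe^{-i\theta};q)_n}.
\end{align*}
Multiplying both sides by the kernel $\frac{(q\beta/\gamma,\beta t^2;q)_n\gamma^n}{(1-\beta)(1-\beta t^2)(q,\gamma t^2;q)_n}$ and summing over $n\ge 0$, the right-hand side becomes $\frac{(\beta tqe^{i\theta},\beta tqe^{-i\theta};q)_\infty}{(te^{i\theta},te^{-i\theta};q)_\infty}$ times a very-well-poised series ${}_6W_5(\beta t^2;q\beta/\gamma,te^{i\theta},te^{-i\theta};q,\gamma)$ whose balance parameter is precisely $aq/(bcd)=\gamma$; Rogers' ${}_6\phi_5$ summation formula then evaluates it in closed form, and after the factor $\frac{(\beta tqe^{\pm i\theta};q)_\infty}{(te^{\pm i\theta};q)_\infty}$ cancels, the right-hand side collapses via the generating function \eqref{defn-c-beta} to $\frac{(\beta q,\beta qt^2;q)_\infty}{(\gamma,\gamma t^2;q)_\infty}\sum_n C_n(\cos\theta;\gamma|q)t^n$.

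On the left-hand side I would interchange the two summations (justified by absolute convergence for small $|t|$, with the final identity extended by analytic continuation in $t$) and evaluate the resulting inner sum over $n$, which is a ${}_2\phi_1$ of argument $\gamma q^k$. Writing $\frac{(\beta t^2;q)_n}{(\gamma t^2;q)_n}=\frac{(\beta t^2;q)_\infty(\gamma t^2q^n;q)_\infty}{(\gamma t^2;q)_\infty(\beta t^2q^n;q)_\infty}$ and applying the $q$-binomial theorem \eqref{defn-bi-thm} twice, exactly as in the chain producing \eqref{eq-beta-gamma}, converts this inner sum into $\frac{(\beta qt^2,\beta;q)_\infty}{(\gamma t^2,\gamma;q)_\infty}\sum_{j=0}^{\infty}\frac{(\gamma/\beta;q)_j(\gamma;q)_{k+j}\beta^jt^{2j}}{(q;q)_j(\beta;q)_{k+j+1}}$. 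Using $(1-\beta)(\beta q;q)_\infty=(\beta;q)_\infty$ to absorb the $(1-\beta)$ from the kernel, the common prefactor $\frac{(\beta q,\beta qt^2;q)_\infty}{(\gamma,\gamma t^2;q)_\infty}$ cancels against the right-hand side, and after the renaming $k\mapsto m$ the proposition drops out. The main obstacle is the careful bookkeeping of infinite-product factors in the ${}_6\phi_5$ evaluation and the subsequent cancellations --- in particular keeping track of how $(1-\beta)$ and $(1-\beta t^2)$ combine with $(\beta q;q)_\infty$ and $(\beta qt^2;q)_\infty$ --- but no genuinely new identity beyond \eqref{eq-cq-b-2}, the $q$-binomial theorem, and Rogers' ${}_6\phi_5$ sum is required.
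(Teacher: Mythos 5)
Your proposal is correct and follows essentially the same route as the paper: replace $t$ by $tq^m$ in \eqref{eq-cq-b-2}, multiply by the very-well-poised kernel, sum to produce a ${}_6W_5$ that Rogers' formula evaluates in closed form, and expand the remaining ${}_2\phi_1$ via the two-fold $q$-binomial computation of \eqref{eq-beta-gamma}. The only differences are normalization choices (you divide the $(1-\beta)(1-\beta t^2)$ into the kernel, the paper carries them on the other side), and your form of the shifted-factorial ratio $\frac{(te^{\pm i\theta};q)_n}{(\beta tqe^{\pm i\theta};q)_n}$ is in fact the correct one needed for the ${}_6W_5$ structure.
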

\begin{proof}
Replacing $t$ by $tq^m$ in the \eqref{eq-cq-b-2}, we have
\begin{align*}
&\sum_{n=0}^{\infty}(1-\beta q^n)C_n(\cos\theta;\beta|q)(tq^m)^n\\
=&\frac{(\beta tqe^{i\theta},\beta tqe^{-i\theta};q)_\infty}
{(te^{i\theta}, te^{-i\theta};q)_\infty}(1-\beta)(1-\beta t^2q^{2m})
\frac{(tqe^{i\theta},tqe^{-i\theta};q)_m}
{(\beta te^{i\theta}, \beta te^{-i\theta};q)_m}.
\end{align*}
Multiplying both sides of the above equation by
\begin{align*}
\frac{(q\beta/\gamma,\beta t^2;q)_m}{(q,\gamma t^2;q)_m}\gamma^m,
\end{align*}
and then summing the resulting equation from $m=0$ to $m=\infty$, we have
\begin{align*}
&\sum_{n=0}^{\infty}(1-\beta q^n)C_n(\cos\theta;\beta|q)t^n\sum_{m=0}^{\infty}\frac{(q\beta/\alpha,\beta t^2;q)_m}{(q,\gamma t^2;q)_m}(\gamma q^n)^m\\
=&(1-\beta)(1-\beta t^2)\frac{(\beta tqe^{i\theta},\beta tqe^{-i\theta};q)_\infty}
{(te^{i\theta}, te^{-i\theta};q)_\infty}
{}_6\phi_5\bigg(\genfrac{}{}{0pt}{}{\beta t^2,q\sqrt{\beta t^2},-q\sqrt{\beta t^2},q\beta/\gamma,te^{i\theta},te^{-i\theta}}{\sqrt{\beta t^2},-\sqrt{\beta t^2},\gamma t^2,\beta tqe^{i\theta},\beta tqe^{-i\theta}};q,\gamma\bigg).
\end{align*}
Using the Rogers ${}_{6}\phi_{5}(\cdot)$ summation formula \cite[Eq. (2.7.1)]{Gasper-Rahman-04}, we deduce that
\begin{align*}
{}_6\phi_5\bigg(\genfrac{}{}{0pt}{}{\beta t^2,q\sqrt{\beta t^2},-q\sqrt{\beta t^2},q\beta/\gamma,te^{i\theta},te^{-i\theta}}{\sqrt{\beta t^2},-\sqrt{\beta t^2},\gamma t^2,\beta tqe^{i\theta},\beta tqe^{-i\theta}};q,\gamma\bigg)
=\frac{(\beta q,q\beta t^2,\gamma te^{i\theta},\gamma te^{-i\theta};q)_\infty}{(\gamma,\gamma t^2,\beta tqe^{i\theta},\beta tqe^{-i\theta};q)_\infty}.
\end{align*}
Combining the above two equations, we arrive at
\begin{align*}
&\sum_{m=0}^{\infty}(1-\beta q^m)C_m(\cos\theta;\beta|q)t^m
\sum_{n=0}^{\infty}\frac{(q\beta/\gamma,\beta t^2;q)_n}{(q,\gamma t^2;q)_n}(\gamma q^m)^n\\
=&\frac{(\gamma tqe^{i\theta},\gamma tqe^{-i\theta};q)_\infty}{(te^{i\theta},te^{-i\theta};q)_\infty}
\frac{(\beta,\beta t^2;q)_\infty}{(\gamma,\gamma t^2;q)_\infty}.
\end{align*}
Applying the generating function for the continuous $q$-ultraspherical polynomials
from \eqref{defn-c-beta} to the right-hand side of the above equation, we deduce that
\begin{align*}
&\sum_{m=0}^{\infty}(1-\beta q^m)C_m(\cos\theta;\beta|q)t^m
\sum_{n=0}^{\infty}\frac{(q\beta/\gamma;q)_n(\gamma t^2q^n;q)_\infty}{(q;q)_n(\beta t^2q^n;q)_\infty}(\gamma q^m)^n\\
=&\frac{(\beta;q)_\infty}{(\gamma;q)_\infty}\sum_{n=0}^{\infty}C_n(\cos\theta;\gamma|q)t^n.
\end{align*}
Substituting \eqref{eq-beta-gamma} into the above equation, we complete the proof Proposition \ref{prop-cq-rela}.
\end{proof}

\subsection*{Acknowledgements}

The first author was partially supported  by  the National Natural Science Foundation of China (Grant No. 12201387). 
The second author was also partially supported  by National Natural Science Foundation of China (Grant No. 12371328).







\end{document}